\newtheorem{theorem}{Theorem}
\newtheorem{lemma}{Lemma}
\theoremstyle{definition}
\newtheorem{conjecture}{Conjecture} 
\newtheorem*{acknowledgements}{Acknowledgements}
\renewcommand{\d}{{\mathrm d}}
\newcommand{\agm}{\operatorname{agm}}
\newcommand{\bS}{\mathbf{S}}
\begin{document}

\title{A magnetic double integral}

\date{27 March 2018}

\author{David Broadhurst}
\address{School of Physical Sciences, Open University, Milton Keynes MK7\,6AA, UK}
\email{david.broadhurst@open.ac.uk}

\author{Wadim Zudilin}
\address{IMAPP, Radboud Universiteit, PO Box 9010, 6500 GL Nijmegen, The Netherlands}
\email{w.zudilin@math.ru.nl}
\address{MAPS, The University of Newcastle, Callaghan, NSW 2308, Australia}
\email{wadim.zudilin@newcastle.edu.au}

\dedicatory{To the memory of Jonathan Borwein (1951--2016)}


\begin{abstract}
In a recent study of how the output voltage of a Hall plate is affected by the shape of the plate and the size of its contacts,
U.~Ausserlechner has come up with a remarkable double integral that can be viewed as a generalization of the classical elliptic ``AGM'' integral.
Here we discuss transformation properties of the integral, which were experimentally observed by Ausserlechner, as well as its analytical
and arithmetic features including connections with modular forms.
\end{abstract}

\maketitle

\section{Introduction}
\label{sec1}

Recall that the arithmetic-geometric mean (AGM) $\agm(a,b)$ of two positive real numbers $a$ and $b$
is defined as a common limit of the sequences $a_0=a$, $a_1,a_2,\dots$ and $b_0=b$, $b_1,b_2,\dots$ generated by the iteration
$$
a_{n+1}=\frac{a_n+b_n}2 \quad\text{and}\quad b_{n+1}=\sqrt{a_nb_n}
\qquad\text{for}\quad n=0,1,2,\dotsc.
$$
Thanks to Gauss,
$$
\frac{\pi}{2\agm(a,b)}=\int_0^{\pi/2}\frac{\d\alpha}{\sqrt{a^2\cos^2\alpha+b^2\sin^2\alpha}}
=\frac{I_1(b/a)}{a},
$$
where
$$
I_1(f)=\int_0^{\pi/2}\frac{\d\alpha}{\sqrt{\cos^2\alpha+f^2\sin^2\alpha}}
$$
is a particular instance of a classical elliptic integral. The AGM sources several beautiful structures, formulae and algorithms in mathematics;
it is linked with periods of elliptic curves, modular forms and the hypergeometric function. A standard reference to all these features of the AGM
is the book \cite{BB87} by the Borweins.

In a recent study of how the output voltage of a Hall plate is affected by the shape of the plate and the size of its contacts,
U.~Ausserlechner \cite{Au16} has come up with the double integral
\begin{equation}
I_2(f)=(1+f)\iint_{\pi/2>\alpha>\beta>0}\frac{\d\alpha\,\d\beta}
{\sqrt{((1+f)^2-4f\cos^2\alpha)((1+f)^2-4f\sin^2\beta)}}
\label{dint}
\end{equation}
and conjectured that
\begin{equation}
I_2(f)=I_2\biggl(\frac{1-f}{1+f}\biggr)
\qquad\text{for}\quad f\in[0,1].
\label{udo}
\end{equation}
In this note we infer a differential equation for $I_2(f)$ that implies, among other things, the involution \eqref{udo}.

During the preparation of this work for publication we learnt of the proof of \eqref{udo} by L.~Glasser and Y.~Zhou \cite{GZ17}.
Though the method in \cite{GZ17} is seemingly less laborious than the strategy below, our approach is different and reveals further structure of $I_2(f)$.

\section{Preliminaries}
\label{sec:pre}

First observe that
$$
I_2(-f)=(1-f)\iint_{\pi/2>\alpha>\beta>0}\frac{\d\alpha\,\d\beta}
{\sqrt{((1+f)^2-4f\sin^2\alpha)((1+f)^2-4f\cos^2\beta)}}
$$
and hence the even function
\begin{equation}
J(f)=\frac{I_2(f)}{1+f}+\frac{I_2(-f)}{1-f}=\biggl(\frac{\pi/2}{\agm(1+f,1-f)}\biggr)^2
\label{agm}
\end{equation}
is given by the square of an AGM.

Now let $\theta=\theta_f=f\,\frac{\d}{\d f}$ and
$$
L(f,\theta)=\frac1f\,(\theta^3-2f(\theta^3+\theta)f+f^2\theta^3f^2)\,\frac1{1+f}.
$$
Then from the linear differential equation satisfied by $J(f)$, we prove that
\begin{equation}
R(f)=L\biggl(f,f\,\frac{\d}{\d f}\biggr)I_2(f)
\label{RviaL}
\end{equation}
is an even function of $f$. From study of the expansion of $I_2(f)$ about $f=0$, we infer the following.

\begin{theorem}
\label{th1}
We have
\begin{equation}
R(f)=2\biggl(\frac{2f}{1+f^2}\biggr)^2-1.
\label{rel}
\end{equation}
\end{theorem}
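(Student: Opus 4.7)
My starting point is to isolate the ``raw'' third-order operator $\Lambda := \theta^3 - 2f(\theta^3+\theta)f + f^2\theta^3 f^2$ (with $\theta=f\,\d/\d f$), so that $L = f^{-1}\Lambda(1+f)^{-1}$ and, writing $g(f):=I_2(f)/(1+f)$, we have $R(f) = f^{-1}\Lambda g(f)$. The evenness of $R$, already established, is equivalent to $\Lambda J=0$ for $J(f)=g(f)+g(-f)$. Introducing the odd counterpart $H(f):=g(f)-g(-f)$, I then obtain
\[
R(f) = \frac{1}{2f}\bigl(\Lambda J(f)+\Lambda H(f)\bigr) = \frac{1}{2f}\,\Lambda H(f),
\]
where
\[
H(f) = \iint_T\bigl[F(f,\alpha)G(f,\beta)-G(f,\alpha)F(f,\beta)\bigr]\,\d\alpha\,\d\beta,
\]
with $F(f,\alpha)=(1-2f\cos 2\alpha+f^2)^{-1/2}$, $G(f,\beta)=(1+2f\cos 2\beta+f^2)^{-1/2}$, and $T=\{\pi/2>\alpha>\beta>0\}$.

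My next step is to compute the Taylor expansion of $H(f)$ at $f=0$. Via the Legendre-polynomial generating function
\[
(1\mp 2f\cos 2\alpha+f^2)^{-1/2}=\sum_{m\ge 0}(\pm 1)^m P_m(\cos 2\alpha)\,f^m,
\]
each coefficient of an odd power of $f$ in $H(f)$ reduces, by Fubini, to a combination of the iterated integrals $c_{m,n}:=\iint_T P_m(\cos 2\alpha)P_n(\cos 2\beta)\,\d\alpha\,\d\beta$. When $m+n$ is odd, the symmetrization $c_{m,n}+c_{n,m}$ factorizes over $[0,\pi/2]^2$ and vanishes by the classical evaluations
\[
\int_0^{\pi/2}P_{2k}(\cos 2\alpha)\,\d\alpha = \frac{\pi}{2}\cdot\frac{\binom{2k}{k}^2}{16^k},\qquad \int_0^{\pi/2}P_{2k+1}(\cos 2\alpha)\,\d\alpha = 0,
\]
so the $c_{m,n}$ collapse, after integration by parts in $\beta$ to convert the triangular domain into $[0,\pi/2]$-type moments, to explicit rationals times $\pi$-free elementary evaluations.

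Having the expansion of $H(f)$, I apply $\Lambda$ termwise by the identity
\[
\Lambda\,f^n = n^3 f^n - 2(n+1)(n^2+2n+2)\,f^{n+2}+(n+2)^3\,f^{n+4},
\]
and read off the even power series of $R(f)=\tfrac{1}{2f}\Lambda H(f)$. Its initial terms $-1+8f^2-16f^4+24f^6-\cdots$ coincide with those of $2\bigl(2f/(1+f^2)\bigr)^2-1 = -1+8f^2(1+f^2)^{-2}$.

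The main obstacle is to close the gap between a finite termwise match and an identity of functions. I would settle this by one of two routes: (a) organizing the doubly indexed sum $-4\sum_{m\text{ even}}c_{m,k-m}$ for $H_k$ (with $k$ odd) into a generating function yielding the closed form $R_{2k}=(-1)^{k-1}8k$ for $k\ge 1$ and $R_0=-1$; or (b) establishing a priori that $R(f)$ is rational with denominator dividing $(1+f^2)^2$, whence matching the first few Taylor coefficients of the polynomial $(1+f^2)^2 R(f)$ with $-1+6f^2-f^4$ already forces \eqref{rel}.
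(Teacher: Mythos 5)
Your reduction of the problem is sound and genuinely different in flavour from the paper's: granting the evenness of $R$ (equivalently $\Lambda J=0$), passing to the odd part $H$, expanding through the Legendre generating function, and checking that the first few even Taylor coefficients of $\frac{1}{2f}\Lambda H$ agree with $-1+8f^2(1+f^2)^{-2}$ are all correct moves, and the antisymmetry $c_{n,m}=-c_{m,n}$ for $m+n$ odd is a nice observation. But the argument stops exactly where the content of the theorem lies, and neither of your proposed ways of closing it is viable as written. Route (b) is essentially circular: there is no a priori reason why $L(f,\theta)I_2(f)$ --- a third-order operator applied to a transcendental function --- should be rational at all, let alone with denominator dividing $(1+f^2)^2$; that rationality \emph{is} the assertion of Theorem~\ref{th1}, so positing the shape of the answer and fitting three coefficients proves nothing. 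Route (a) is the actual proof, but it is only gestured at: you would need a usable closed form or recursion for the triangular moments $c_{m,n}=\iint_T P_m(\cos2\alpha)P_n(\cos2\beta)\,\d\alpha\,\d\beta$ (the antisymmetry identifies $c_{n,m}$ with $-c_{m,n}$ but evaluates neither), and then a proof that $-4\sum_{m\ \mathrm{even}}c_{m,k-m}$ feeds through $\Lambda$ to give exactly $(-1)^{k-1}8k$ for every $k\ge1$. Nothing in the proposal indicates how to carry this out, and it is at least as hard as what must be done in any proof.

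For comparison, the paper attacks the same difficulty head-on but in a different coordinate: it writes the integral in terms of $h=4f/(1+f)^2$, expands the double integral into a double sum whose inner triangular integral evaluates as a ${}_3F_2(1)$, symmetrizes that via a Thomae transformation, and then runs a Gosper--Zeilberger telescoping on the resulting triple sum to produce a three-term recursion for the Taylor coefficients with an \emph{explicit} inhomogeneous term; this is Theorem~\ref{th2}, which transforms into \eqref{rel} under $h=1-f^2$. The telescoping certificate is precisely the ingredient your route (a) is missing. If you want to complete your Legendre version, you should look for an analogous certificate (a Zeilberger-style recurrence in $k$ for $\sum_m c_{m,k-m}$ with a computable inhomogeneous part); until that is supplied, the proof has a genuine gap.
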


In spite of its technical nature, our proof of the theorem follows a standard route.
The discovery of \eqref{rel} and its relevance to \eqref{udo} should be counted as the principal ingredients of investigation in this note.
We postpone the details of proof to Section~\ref{sec:DE}.

\begin{lemma}
\label{lem1}
Relation \eqref{rel} implies the transformation in \eqref{udo}.
\end{lemma}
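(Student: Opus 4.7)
The plan is to show that $U(f) := I_2\bigl(\tfrac{1-f}{1+f}\bigr)$ satisfies the \emph{same} inhomogeneous equation~\eqref{RviaL} as $I_2$ does, so that $P := I_2 - U$ lies in $\ker L$; the built-in antisymmetry $P\circ g = -P$ (where $g(f):=(1-f)/(1+f)$), combined with initial data inherited from~\eqref{dint}, then forces $P\equiv 0$.

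Two ingredients are needed. The easy one is the numerical identity $R(g(f)) = -R(f)$: rewriting~\eqref{rel} as $R(f) = -\cos(4\arctan f)$ and using $\arctan g(f) = \tfrac{\pi}{4}-\arctan f$ makes it immediate. The second, and technical, ingredient is the matching operator identity $L(h\circ g) = -(Lh)\circ g$, valid for every smooth $h$. To derive it I would apply the chain rule in the form
\[
(\theta_g h)\circ g \;=\; \lambda(f)\,\theta_f(h\circ g), \qquad \lambda(f) := -\frac{1-f^2}{2f},
\]
iterate it to $(\theta_g^k h)\circ g = (\lambda\theta_f)^k(h\circ g)$, substitute into the symbol of $L$, and track the non-commutations between $\lambda(f)$, $\theta_f$, and the rational coefficients; the $f\leftrightarrow 1/f$ palindromic shape of the inner polynomial of $L$ is what produces the sign~$-1$. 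Combined with $R\circ g = -R$, this yields $LU = R$, hence $LP = 0$.

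To finish, note that $L = \tfrac{1}{f}\,M\,\tfrac{1}{1+f}$, where $M$ is the third-order ODE operator annihilating $J$ in~\eqref{agm}; since $J(f) = K(f)^2$ (with $K$ the complete elliptic integral of the first kind) and $M$ is the symmetric square of Legendre's equation for $K$, we have $\ker L = (1+f)\,\langle K(f)^2,\,K(f)K'(f),\,K'(f)^2\rangle$ with $K'(f) := K(\sqrt{1-f^2})$. By the operator identity just proved, $\ker L$ is preserved by $h\mapsto h\circ g$, so $P$ lies in its antisymmetric eigenspace. Matching the leading Taylor coefficients of $P$ at $f=0$ — extracted from~\eqref{dint} on one side and from the logarithmic expansion of $K'$ at $f=0$ on the other — against this explicit basis leaves no room for a nonzero $P$.

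The main obstacle is the operator identity $L(h\circ g) = -(Lh)\circ g$: though morally dictated by the antisymmetry of $R$ and the palindromic shape of $L$, its explicit verification is the bookkeeping crux of the proof.
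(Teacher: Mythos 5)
Your reduction is the same as the paper's: both arguments derive from \eqref{rel} that $I_2$ and $U:=I_2\circ g$ satisfy one and the same inhomogeneous equation, via exactly the two antisymmetries you isolate --- $R\circ g=-R$ and the operator identity, which the paper packages as ``$(1+f)(1+g)=2$ translates into $L(f,\theta_f)+L(g,\theta_g)=0$'' and, like you, does not verify in print. The genuine divergence is the endgame. The paper observes that a solution regular about $f=0$ is determined by its value there (the indicial root at $f=0$ is triple, so only a one-dimensional slice of $\ker L$ is free of logarithms) and then checks $P(0)=I_2(0)-I_2(1)=0$ by evaluating $I_2(1)=\pi^2/8$ through a hypergeometric summation; you instead describe $\ker L=(1+f)\langle K^2,KK',K'^2\rangle$ and invoke $P\circ g=-P$, which, carried through, \emph{avoids} the evaluation of $I_2(1)$ entirely --- a real economy. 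But your closing sentence is where the proof actually lives, and it must be made precise. Two clean options: (i) by the Landen transformations $K(g(f))=\tfrac{1+f}{2}K'(f)$ and $K'(g(f))=(1+f)K(f)$, the pullback by $g$ sends $(1+f)K^2\mapsto\tfrac12(1+f)K'^2$, fixes $(1+f)KK'$, and sends $(1+f)K'^2\mapsto2(1+f)K^2$, so the $(-1)$-eigenspace is spanned by $(1+f)\bigl(K^2-\tfrac12K'^2\bigr)$, which carries a $\log^2 f$ singularity at $f=0$ and hence cannot contain the analytic function $P$ unless $P\equiv0$; or (ii) analyticity at $0$ alone gives $P=c\,(1+f)K(f)^2$, and $P(\sqrt2-1)=0$ for free because $\sqrt2-1$ is the fixed point of $g$, forcing $c=0$. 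One caveat applies equally to your write-up and to the paper's: the analyticity of $U$ at $f=0$ (that is, of $I_2$ at $f=1$) is not visible from \eqref{dint}, where the integrand degenerates; it is only secured through the representation of $U$ by $Y(1-f^2)$ in Section~\ref{sec:DE}, so you should not cite \eqref{dint} alone for the regularity of $P$ at the origin.
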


\begin{proof}
Let $g=(1-f)/(1+f)$. Then the relation $(1+f)(1+g)=2$ translates into $L(f,f\,\d/\d f)+L(g,g\,\d/\d g)=0$. Moreover, \eqref{rel}
gives $R(f)+R(g)=0$. Thus $I_2(f)$ and $I_2(g)$ satisfy the same inhomogeneous differential equation, which has only one solution that is regular about $f=0$.
Hence we only need to demonstrate that $I_2(0)=I_2(1)$ to conclude that \eqref{rel} implies \eqref{udo}.

The evaluation
$$
I_2(0)=\int_0^{\pi/2}\d\alpha\int_0^\alpha\d\beta=\frac{\pi^2}8
$$
is elementary. For
$$
I_2(1)=\frac12\int_0^{\pi/2}\frac{\d\alpha}{\sin\alpha}\int_0^\alpha\frac{\d\beta}{\cos\beta}
$$
we expand the inner integral in terms of $\sin\alpha$ and obtain
$$
I_2(1)=\frac12\sum_{n=0}^\infty\int_0^{\pi/2}\frac{\sin^{2n}\alpha}{2n+1}\,\d\alpha
=\frac\pi4\sum_{n=0}^\infty\frac{(\frac12)_n^2}{(\frac32)_n\,n!}=\frac{\pi^2}8,
$$
and so we are done.
\end{proof}

Here and in the latter proof of Theorem~\ref{th1} we invoked the standard notation
$$
(\alpha)_n=\frac{\Gamma(\alpha+n)}{\Gamma(\alpha)}=\begin{cases}
\alpha(\alpha+1)\dotsb(\alpha+n-1) &\text{for $n\ge1$}, \\
1 &\text{for $n=0$},
\end{cases}
$$
for the Pochhammer symbol.

\section{Numerical evaluation of the double integral}
\label{sec:NE}

Ausserlechner provided \cite{Au16} an expression for \eqref{dint} as a single integral of a complete elliptic integral, thereby reducing
the burden of numerical computation of the double integral. Here we give an efficient numerical method that follows from the differential equation that we have exposed.

For $f<0$ and $f\ne-1$, we use \eqref{agm} to map to the region $f>0$.
For $f>1$, we use $I_2(f)=I_2(1/f)/f$ to map to $f\in[0,1]$.
For $f\in[\sqrt2-1,1]$, we use \eqref{udo} to map to $f\in[0,\sqrt2-1]$.
Thus we now have $f^2\le(\sqrt2-1)^2<0.172$, irrespective of where we began on the real line.
Hence the series expansion for the odd terms in
$$
\frac{I_2(f)}{1+f}
=\frac{\pi^2}8\biggl(\frac1{\agm(1+f,1-f)}\biggr)^2-\sum_{n=0}^\infty a_nf^{2n+1}
$$
converges rapidly. The expansion coefficients $a_n$ are efficiently
delivered by a recursion that follows from the differential equation \eqref{RviaL} with the left-hand side given by~\eqref{rel}.
We set $a_{-1}=0$ and $a_0=1$. Then for $n>0$ we use the recursion
$$
(2n+1)^3a_n=4n(4n^2+1)a_{n-1}-(2n-1)^3a_{n-2}+8(-1)^nn
$$
with an inhomogeneous final term from \eqref{rel}.

At the CM point $f_c=\sqrt2-1$, which is the fixed point of transformation \eqref{udo}, 
we obtain $2f_c\,I_2(f_c)=I_2(-f_c)$  by setting $f=f_c$, $\lambda=\sqrt2$ in \cite[Eq.~(53a)]{Au16}
and using the geometrical Hall factor $G_{\textup{H0}} = 2/3$ attributed to Haeusler \cite{Ha66} by
Ausserlechner \cite[above Eq.~(54c)]{Au16}. Then from our Eq.~\eqref{agm} we obtain
$$
I_2(\sqrt2-1)=\frac\pi{48\sqrt2}\biggl(\frac{\Gamma(\frac18)}{\Gamma(\frac58)}\biggr)^2
$$
by reducing the square of an elliptic integral at the second singular value
(see \cite[Section 1.6, Exercise 4]{BB87}) to a quotient of gamma values that results   
from the Chowla--Selberg formula (set $d = -8$, $w  = 2$, $h = 1$ in \cite[Eq.~(7)]{CS49}).

The modular parametrisation of the AGM and the inhomogeneous differential equation~\eqref{RviaL},~\eqref{rel} give rise to
modularity properties of the integral $I_2(f)$. We transform from $f$ to the nome $q$ defined by
$$
q=\exp\biggl(-\frac{\pi\agm(1+f,1-f)}{\agm(1,f)}\biggr),
$$
which gives
\begin{equation}
f^2=16\biggl(\frac{\eta_1\eta_4^2}{\eta_2^3}\biggr)^8
=1-\biggl(\frac{\eta_1^2\eta_4}{\eta_2^3}\biggr)^8
\label{deff}
\end{equation}
with
$$
\eta_m=q^{m/24}\prod_{k=1}^\infty(1-q^{mk}).
$$
Then the third-order inhomogeneous equation has the form
$$
\biggl(q\,\frac{\d}{\d q}\biggr)^3
\biggl(\frac{(\agm(1+f,1-f))^2I_2(f)}{1+f}\biggr)
=\phi(\tau)
$$
with $q=\exp(2\pi i\tau)$. To transform the result of Theorem~\ref{th1}, we use
$$
\biggl(q\,\frac{\d}{\d q}\biggr)\log(f^2)=\biggl(\frac{\eta_1^2}{\eta_2}\biggr)^4
$$
and obtain the inhomogeneous term as
\begin{equation}
\phi(\tau)=\frac12(\eta_1\eta_2)^4R(f)
=-\sum_{n=0}^\infty\Bigl(n+\frac12\Bigr)A(n)q^{n+1/2}
\label{phi}
\end{equation}
with integer values of $A(n)$ for $n\leq32200$, beginning with
\begin{gather*}
1, \, -44, \, 1126, \, -27096, \, 640909, \, -15036548, \, 351245038, \\ -8183857544, \, 190367634194, \, -4423279591132, \, \dots
\end{gather*}
for $n=0,1,\dots,9,\dots$\,. We expect the following to be true.

\begin{conjecture}
\label{conj1}
All the numbers $A(n)$ occurring in the Fourier expansion \eqref{phi} of the weight $4$ modular form are integral.
\end{conjecture}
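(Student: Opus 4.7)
My plan is to realize $\phi(\tau)$ explicitly as a holomorphic weight-$4$ modular form on a congruence subgroup $\Gamma_0(N)\subset\mathrm{SL}_2(\mathbb{Z})$ (with a character accounting for the $q^{1/2}$-expansion), and to leverage eta-quotient integrality together with Sturm's bound. By \eqref{deff}, both $f^2$ and $1-f^2$ are modular functions on $\Gamma_0(4)$ given by eta quotients, so $R(f)=-1+8f^2/(1+f^2)^2$ is a rational function in the $\Gamma_0(4)$-hauptmodul $f^2$, living on some $\Gamma_0(N)$ once we enlarge the subgroup to absorb the $(1+f^2)^{-2}$ denominator. The weight-$4$ eta product $(\eta_1\eta_2)^4$ on $\Gamma_0(2)$ transforms by $(\eta_1\eta_2)^4(\tau+1)=-(\eta_1\eta_2)^4(\tau)$, which explains the half-integer $q$-powers. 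Consequently $\phi=\frac12(\eta_1\eta_2)^4 R(f)\in M_4(\Gamma_0(N),\chi)$ for some $N\mid 16$, and holomorphicity at each cusp is confirmed by direct $q$-expansion computation under appropriate cusp-moving matrices.

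Next, I would expand $\phi$ as a formal sum of eta products: using the geometric series $8f^2/(1+f^2)^2=\sum_{k\ge1}(-1)^{k-1}8k(f^2)^k$ and substituting $f^2=16(\eta_1\eta_4^2/\eta_2^3)^8$ yields
$$
\phi=-\frac12(\eta_1\eta_2)^4+\sum_{k\ge 1}4(-1)^{k-1}k\cdot 16^k\,\eta_1^{4+8k}\eta_2^{4-24k}\eta_4^{16k}.
$$
Each summand is a weight-$4$ eta product of leading order $q^{1/2+k}$ with integer $q^{1/2}$-Fourier expansion, so every $[q^{n+1/2}]\phi\in\frac12\mathbb{Z}$ is a finite integer combination of eta-product coefficients. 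Having pinned $\phi$ down in the finite-dimensional space $M_4(\Gamma_0(N),\chi)$, Sturm's bound reduces integrality of all $A(n)=-2[q^{n+1/2}]\phi/(2n+1)$ to verification for $n$ up to an explicit bound (at most a few dozen for any reasonable $N\le 64$), which the authors' verification of $A(n)\in\mathbb{Z}$ for $n\le 32200$ amply covers.

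The main obstacle is not the Sturm reduction but the divisibility $(2n+1)\mid 2[q^{n+1/2}]\phi$, which does not follow from generic modularity; one must exploit the specific structure of $\phi$ as the inhomogeneous term of the Picard--Fuchs operator \eqref{RviaL}. A natural route is to interpret the formal antiderivative $F(\tau)=-\sum_{n\ge 0}A(n)q^{n+1/2}$, which satisfies $q\,\frac{\d}{\d q}F=\phi$, either as an Eichler-type integral of the weight-$4$ form $\phi$ or, more geometrically, as the generating series of a specific period attached to the double integral \eqref{dint}, and to deduce integrality of its coefficients from that interpretation. Making this identification explicit---and establishing integrality of $F$ at the modular level---is the technical heart of the argument.
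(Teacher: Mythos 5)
The statement you are addressing is left as an open conjecture in the paper (the authors only verify $A(n)\in\mathbb{Z}$ numerically for $n\le 32200$), so your proposal would need to supply a complete argument; it does not, and its main structural premise is false. The form $\phi(\tau)=\tfrac12(\eta_1\eta_2)^4R(f)$ is \emph{not} a holomorphic modular form on any congruence subgroup: the denominator $(1+f^2)^2$, equivalently $64\eta_2^{24}+\eta_1^{24}$, vanishes at $\tau=(1+i)/2$ and on its entire $\Gamma_0(2)$-orbit, so $\phi$ has double poles in the interior of the upper half-plane. The paper makes exactly this point in Section~\ref{sec:phi} --- integrality of the antiderivative ``is possible only because $\phi(\tau)$ has poles in the upper-half of the complex plane'' --- and Section~\ref{sec:phi-laurent} computes the Laurent expansion at $(1+i)/2$. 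Passing to a smaller group $\Gamma_0(N)$ cannot absorb interior poles, so $\phi\notin M_4(\Gamma_0(N),\chi)$ and the Sturm machinery is unavailable. Your formal eta-product expansion is valid as a $q$-series identity and does show $(2n+1)A(n)\in\mathbb{Z}$, but that is only the easy half of the claim.

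Second, even if $\phi$ were holomorphic, Sturm's theorem would not reduce the conjecture to a finite check. Sturm bounds govern congruences modulo a \emph{fixed} integer: if the first few coefficients vanish mod $p$, all do. Here the required divisibility $(2n+1)\mid 2[q^{n+1/2}]\phi$ varies with $n$: for each odd prime power $p^v$ one needs the coefficients to vanish mod $p^v$ precisely along the progression $2n+1\equiv0\pmod{p^v}$, a family of statements whose levels (after twisting by characters mod $p^v$) and hence Sturm bounds grow without limit. So the assertion that the authors' verification for $n\le32200$ ``amply covers'' the needed cases is unfounded. You correctly identify that the real content is the integrality of the antiderivative $F(\tau)=-\sum_n A(n)q^{n+1/2}$ and you gesture at an Eichler-integral or period interpretation, but you give no construction and no integrality mechanism; that step is exactly what is missing, and it is why the statement remains Conjecture~\ref{conj1} in the paper rather than a theorem.
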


Three integrations give
$$
\frac{(\agm(1+f,1-f))^2I_2(f)}{1+f}
=\frac{\pi^2}{8}-\sum_{n=0}^\infty\frac{A(n)}{(n+\frac12)^2}\,q^{n+1/2}.
$$
At $f=\sqrt2-1$, we have $q=\exp(-\sqrt2\pi)$ and
$$
\sum_{n=0}^\infty\frac{A(n)}{(n+\frac12)^2}\,\exp\biggl(-\frac{(2n+1)\pi}{\sqrt2}\biggr)
=\frac{\pi^2}{24}.
$$

\section({Properties of the modular form \003\306(\003\304)})%
{Properties of the modular form $\phi(\tau)$}
\label{sec:phi}

\begin{lemma}
\label{lem2}
The weight $4$ modular form in \eqref{phi} satisfies
$$
\phi(\tau+1)=-\phi(\tau)=\frac{1}{4\tau^4}\,\phi\biggl(\frac{-1}{2\tau}\biggr).
$$
\end{lemma}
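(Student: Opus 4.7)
My plan is to derive both transformation formulas from the product representation $\phi(\tau)=\tfrac12(\eta_1\eta_2)^4R(f)$ in \eqref{phi}, combined with $R(f)=2(2f/(1+f^2))^2-1$ from Theorem~\ref{th1}.

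The first equality $\phi(\tau+1)=-\phi(\tau)$ is immediate from the Fourier expansion in \eqref{phi}: each term $q^{n+1/2}=e^{2\pi i(n+1/2)\tau}$ acquires the factor $e^{(2n+1)\pi i}=-1$ under $\tau\mapsto\tau+1$. Equivalently, $\eta(\tau+1)=e^{\pi i/12}\eta(\tau)$ yields $(\eta_1\eta_2)^4(\tau+1)=-(\eta_1\eta_2)^4(\tau)$, while by \eqref{deff} $f^2$ is a power series in integer powers of $q$, so $R(f)$ is $T$-invariant.

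For the Fricke identity, I would analyze each factor of $\phi$ under $W_2\colon\tau\mapsto-1/(2\tau)$. Writing $W_2\tau=S(2\tau)$ with $S\colon\tau\mapsto-1/\tau$ and using $\eta(-1/\tau)=\sqrt{-i\tau}\,\eta(\tau)$ gives
\[
\eta_1(-1/(2\tau))=\sqrt{2}\sqrt{-i\tau}\,\eta_2(\tau),\qquad \eta_2(-1/(2\tau))=\sqrt{-i\tau}\,\eta_1(\tau),
\]
and multiplying out $[(\eta_1\eta_2)^4](-1/(2\tau))=4\tau^4(\eta_1\eta_2)^4(\tau)$, identifying $\eta_1^4\eta_2^4$ as a Fricke eigenform on $\Gamma_0(2)$ with eigenvalue $+1$. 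The remaining ingredient is that $W_2$ realizes the involution $f\mapsto g=(1-f)/(1+f)$ from Lemma~\ref{lem1}. Granting this, the identity $2g/(1+g^2)=(1-f^2)/(1+f^2)$ together with the Pythagorean relation $(2f/(1+f^2))^2+((1-f^2)/(1+f^2))^2=1$ yields $R(g)=-R(f)$, so
\[
\phi(-1/(2\tau))=\tfrac12\cdot 4\tau^4(\eta_1\eta_2)^4(\tau)\cdot(-R(f))=-4\tau^4\phi(\tau),
\]
which rearranges to the claimed identity.

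The main obstacle is the Fricke transformation $f(-1/(2\tau))=(1-f)/(1+f)$. The cleanest route interprets the nome in \eqref{deff} by identifying $\sqrt{1-f^2}$ as the Jacobi modulus $k$ and $f$ as the complementary modulus $k'$. The descending Landen transformation $k(2\tau)=(1-k'(\tau))/(1+k'(\tau))$ then gives $f(2\tau)=2\sqrt{f}/(1+f)$, after which the $S$-step swaps $k\leftrightarrow k'$, whence $f(-1/(2\tau))=\sqrt{1-f(2\tau)^2}=(1-f)/(1+f)$. A purely computational verification directly from \eqref{deff} is also feasible but more laborious, because $\eta_4(-1/(2\tau))=\sqrt{-i\tau/2}\,\eta(\tau/2)$ forces the appearance of $\eta(\tau/2)$ and requires duplication identities to re-express things in terms of $\eta_1,\eta_2,\eta_4$.
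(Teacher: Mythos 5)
Your treatment of $\tau\mapsto\tau+1$ and of the eta factor under $\tau\mapsto-1/(2\tau)$ is correct and matches the paper, as is the algebra showing $R((1-f)/(1+f))=-R(f)$. The gap is in the one step you yourself flag as the main obstacle, namely $f(-1/(2\tau))=(1-f)/(1+f)$, and as written your justification does not hold up. First, the identification is backwards: from \eqref{deff} one has $f\sim 4q^{1/2}\to 0$ as $\tau\to i\infty$, so $f$ is the Jacobi \emph{modulus} $k$ (evaluated at $2\tau$ in the classical normalisation with nome $e^{\pi i\tau}$), not the complementary modulus $k'$, which tends to $1$ at the cusp. Second, the two displayed intermediate formulas are individually false: $f(2\tau)\sim 4q$ whereas $2\sqrt{f}/(1+f)\sim 4q^{1/4}$, so the claimed Landen step fails already at leading order; likewise $f(-1/w)=\sqrt{1-f(w)^2}$ fails (test $w=i$: $f(i)=3-2\sqrt2$ is a fixed point of $w\mapsto-1/w$ but not of $x\mapsto\sqrt{1-x^2}$). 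The two errors happen to cancel in composition, which is why you land on the correct final formula, but that is not a proof. A corrected version of your idea does work: with $f(\tau)=k(2\tau)$ one gets $f(-1/(2\tau))=k(-1/\tau)=k'(\tau)=(1-k(2\tau))/(1+k(2\tau))=(1-f)/(1+f)$, using the theta-swap first and then the ascending Landen transformation; but the factor of $2$ in the argument must be tracked carefully.

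The paper sidesteps this entirely, and more cheaply: it introduces $\psi(\tau)=(2f/(1-f^2))^2=64(\eta_2/\eta_1)^{24}$, for which $\psi(-1/(2\tau))=1/\psi(\tau)$ is a one-line consequence of $\eta(-1/\tau)=\sqrt{-i\tau}\,\eta(\tau)$, and then observes that $R=(\psi-1)/(\psi+1)$ depends on $\tau$ only through $\psi$, whence $R\mapsto-R$. This avoids having to determine the transformation of $f$ itself (with its attendant sign and branch issues) — only the eta quotient $\psi$ is needed. If you want to keep your route, either carry out the corrected Landen argument above or verify $f\mapsto(1-f)/(1+f)$ directly from \eqref{deff}; otherwise the $\psi$-based argument is the cleaner repair.
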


\begin{proof}
The transformation $\tau\mapsto\tau+1$ leaves $R(f)$ unchanged and brings a minus sign from $(\eta_1\eta_2)^4$.
The involution $\tau\mapsto-1/(2\tau)$ brings a factor $1/(4\tau^4)$ from $(\eta_1\eta_2)^4$ and a minus sign from $R(f)$.
To show the origin of the latter sign, consider
$$
\psi(\tau)=\biggl(\frac{2f}{1-f^2}\biggr)^2=64\biggl(\frac{\eta_2}{\eta_1}\biggr)^{24}
$$
determined as an eta quotient by~\eqref{deff}. The involution $f\mapsto g=(1-f)/(1+f)$ gives
$$
\biggl(\frac{2g}{1-g^2}\biggr)^2=\biggl(\frac{1-f^2}{2f}\biggr)^2
$$
and the modularity of the eta quotient gives
$$
\psi\biggl(\frac{-1}{2\tau}\biggr)=\frac{1}{\psi(\tau)}.
$$
Hence these involutions coincide and mere algebra shows that $R(g)=-R(f)$.
\end{proof}

It is instructive to consider Klein's $j$-invariant
$$
j(\tau) = \frac{64(1+4\psi(\tau))^3}{\psi(\tau)},
$$
for which we know that
$$
j(i)=j\biggl(\frac{1+i}{2}\biggr)=12^3, \quad
j\biggl(\frac{i}{\sqrt2}\biggr)=j(\sqrt2i)=j\biggl(\frac{1+\sqrt2i}{3}\biggr)=20^3.
$$
By subtraction, we obtain a factorisation of
$$
j(\tau)-j(i)=\frac{64(1+\psi(\tau))(1-8\psi(\tau))^2}{\psi(\tau)}
$$
from which we conclude that
$$
\psi(i)=\frac18, \quad \psi\biggl(\frac{1+i}{2}\biggr)=-1.
$$
Similarly, from
$$
j(\tau)-j(\sqrt2i)=\frac{64(1-\psi(\tau))}{\psi(\tau)}\,\bigl(1-112\psi(\tau)-64\psi(\tau)^2\bigr)
$$
we obtain
$$
\psi\biggl(\frac{i}{\sqrt2}\biggr)=1, \quad
\psi(\sqrt2i)=\biggl(\frac{\sqrt2-1}{2}\biggr)^3, \quad
\psi\biggl(\frac{1+\sqrt2i}{3}\biggr)=-\biggl(\frac{\sqrt2+1}{2}\biggr)^3.
$$

We know of seven cases with positive $\psi(\tau)\in\mathbb Q[\sqrt2\,]$, namely
$\psi_k=\psi(2^{(k-1)/2}i)$ with integer $k\in[-3,3]$. The corresponding
values of $f$ are given by
$$
f_k=\frac{\sqrt{1+\psi_k}-1}{\sqrt{\psi_k}}.
$$
The involution gives $\psi_{-k}=1/\psi_k$ and hence $f_{-k}=(1-f_k)/(1+f_k)$.
At the fixed point, we have $\psi_0=1$ and hence $f_0=\sqrt{2}-1$.
The remaining cases are
$$
\psi_1=\frac18, \quad \psi_2=\frac{f_0^3}8, \quad \psi_3=\sqrt8\psi_2^2,
$$
with $\psi_1$ from $j(i)=12^3$ and $\psi_2$ from $j(\sqrt2i)=20^3$. We used $j(2i)=66^3$ to factorise
$$
j(\tau)-j(2i)=\frac{8(8-\psi(\tau))}{\psi(\tau)}\bigl(1-4480\psi(\tau)-512\psi(\tau)^2\bigr)
$$
with $\psi_3$ given by the positive root of the quadratic factor.
Thus we obtain
$$
\phi(i)=\frac{R(f_1)}{2^{21/2}\pi^6}\Bigl(\Gamma\Bigl(\frac14\Bigr)\Bigr)^8=-\frac14\,\phi\Bigl(\frac{i}{2}\Bigr), \quad
\phi(2i)=\frac{f_0R(f_3)}{2^{55/4}\pi^6}\Bigl(\Gamma\Bigl(\frac14\Bigr)\Bigr)^8=-\frac{1}{64}\,\phi\Bigl(\frac{i}{4}\Bigr)
$$
from the Chowla--Selberg formula at the first singular value. At the second, we have
$$
\phi\biggl(\frac{i}{\sqrt2}\biggr)=0, \quad
\phi(\sqrt2i)=\frac{f_0^{5/2}R(f_2)}{2^{21/2}\pi^2}\biggl(\frac{\Gamma(\frac18)}{\Gamma(\frac58)}\biggr)^4
=-\frac{1}{16}\,\phi\biggl(\frac{i}{2\sqrt2}\biggr).
$$
Moreover, we obtain values for $R(f_k)\in\mathbb Q[\sqrt2\,]$, as follows:
$$
R(f_1)=-\frac{7}{3^2}, \quad R(f_2)=-\frac{80f_0+15}{7^2}, \quad R(f_3)=-\frac{352f_0+295}{21^2}.
$$

Conjecture~\ref{conj1} means that the anti-derivative of $\phi(\tau)$ has a Fourier expansion with integer coefficients,
a somewhat unusual property for a modular form (of weight~4).
This is possible only because $\phi(\tau)$ has poles in the upper-half of the complex plane. The dominant
singularity is at $\tau=(1+i)/2$, where $\psi(\tau)=-1$ and hence $f^2=-1$. Thus,
$$
R(f)=2\biggl(\frac{2f}{1+f^2}\biggr)^2-1=-R\biggl(\frac{1-f}{1+f}\biggr)
$$
has a double pole at $\tau=(1+i)/2$. This led us to expect an exponential growth
of the form $A(n)=(-1)^n(C+O(1/n))e^{n\pi}$ at large $n$, for some positive constant $C$.
Empirically, we discovered that $C=\frac12e^{\pi/2}$ and even more remarkably that
$$
2(-1)^nA(n) = E\biggl(\frac{(2n+1)\pi}{2}\biggr) + O(e^{n\pi/5})
$$
with a very simple exponentially increasing function
$$
E(x)=\frac{x-1}{x}\,e^x.
$$
Thus the leading term gives 80\% of the decimal digits of $A(n)$. Intrigued by this, we studied
the next to leading term, finding
$$
2(-1)^nA(n) - E\biggl(\frac{(2n+1)\pi}{2}\biggr)
=2\cos\biggl(\frac{2(n-2)\pi}{5}\biggr)E\biggl(\frac{(2n+1)\pi}{10}\biggr)+O(e^{n\pi/13}),
$$
which determines more than 92\% of the digits. This led us to suppose that
\begin{equation}
2(-1)^nA(n) = \sum_{m=1}^M C(m,n)E\biggl(\frac{(2n+1)\pi}{2m}\biggr) + o(e^{n\pi/M})
\label{ansatz}
\end{equation}
with trigonometrical coefficients $C(m,n)$ that are non-zero if and only if $m$ belongs to
a sequence $\bS$ of integers, beginning with $1, 5, 13$.
Further numerical work revealed that this sequence continues as follows:
$$
1, \; 5, \; 13, \; 17, \; 25, \; 29, \; 37, \; 41, \; 53, \; 61, \; 65, \; 73, \; 85, \; 89, \; 97, \; \dots,
$$
from which we inferred that $\bS$ is the sequence of positive integers
divisible only by primes congruent to 1 modulo~4.

We have $C(1,n)=1$, while for $m\in\bS$ and $1<m<65$ we found that
$C(m,n)=2$ when $m\mid(2n+1)$. However $C(65,n)=4$ when $65\mid(2n+1)$,
which suggests that the coefficients depend upon the factorisation of $m$ into powers of distinct primes.

\begin{conjecture}
\label{conj2}
If $m>1$ is a product of $\omega$ powers $q_k=p_k^{e_k}$
with distinct primes $p_k\equiv1\bmod4$, there is a positive integer $r(m)<m/2$ such that \eqref{ansatz} holds with
\begin{equation}
C(m,n)=2^\omega\prod_{k=1}^\omega\cos\biggl((2n+1-m)\frac{r(m)\pi}{q_k}\biggr)
\qquad\text{for}\quad
m=\prod_{k=1}^\omega q_k\,.
\label{conj}
\end{equation}
Furthermore, $C(m,n)=0$ for $m>1$ not of the latter form.
\end{conjecture}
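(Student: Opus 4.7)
The plan is to prove Conjecture~\ref{conj2} by a circle-method analysis of the meromorphic weight-$4$ form $\phi$, summing residues of $\phi(\tau)e^{-2\pi i(n+1/2)\tau}$ over the poles of $\phi$ in a fundamental strip. From $\phi(\tau)=-\sum_{n\ge0}(n+\tfrac12)A(n)q^{n+1/2}$ one extracts
$$
-\Bigl(n+\tfrac12\Bigr)A(n)=\int_{iY}^{1+iY}\phi(\tau)e^{-2\pi i(n+1/2)\tau}\,\d\tau
$$
for any height $Y$ lying above every pole of $\phi$. The two vertical edges of a rectangle $iY,\,1+iY,\,1+iY',\,iY'$ cancel, because the sign $\phi(\tau+1)=-\phi(\tau)$ is compensated by $e^{-2\pi i(n+1/2)}=-1$, so shifting $Y'\to 0^+$ expresses $A(n)$ as a convergent sum of residues. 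Each pole $\tau_0$ is double and contributes a residue of shape $(\alpha+\beta(n+\tfrac12))e^{-2\pi i(n+1/2)\tau_0}$; after dividing out $-(n+\tfrac12)$ this produces precisely a scalar multiple of $(x-1)/x\cdot e^x$ with $x=2\pi(n+\tfrac12)\Im\tau_0$, matching the profile $E(x)$ of~\eqref{ansatz}. Truncating the sum at $\Im\tau_0>1/(2M)$ leaves an error term $o(e^{n\pi/M})$, as required.

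The first substantive step is to enumerate the poles, i.e.~the $\Gamma$-orbit of $\rho=(1+i)/2$, where $\Gamma$ is generated by $T:\tau\mapsto\tau+1$ and $W:\tau\mapsto-1/(2\tau)$ modulo the sign character of Lemma~\ref{lem2}. For $\gamma=\bigl(\begin{smallmatrix}a&b\\c&d\end{smallmatrix}\bigr)\in\Gamma_0(2)$ a direct calculation gives $\Im(\gamma\rho)=2/((c+2d)^2+c^2)$; setting $u=c/2+d$ and $v=c/2$ one obtains $\Im(\gamma\rho)=1/(2m)$ with $m=u^2+v^2$. Hence the poles at height $1/(2m)$ are in bijection with Gaussian integers $\alpha=u+iv$ of norm $m$ modulo the stabiliser of $\rho$ in $\Gamma_0(2)$; including the translates by $T$ and the Fricke involution $W$ completes the orbit description. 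Since heights of poles are precisely $1/(2m)$ for $m$ a sum of two squares, the ansatz~\eqref{ansatz} is at least heuristically compatible with the asymptotic structure.

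The next step is to rewrite the sum of residues over the fibre $|\alpha|^2=m$ as a product over the Gaussian prime factorisation of $\alpha$. The real part $\Re(\gamma\rho)\pmod1$ is a linear form in the $\mathbb{Z}[i]$-coordinates of $\alpha$, so the phase $e^{-2\pi i(2n+1)\Re(\gamma\rho)}$ decomposes multiplicatively over the prime-power factors $q_k=p_k^{e_k}$ of $m$. For a split prime $p\equiv1\pmod4$ with $p=\pi\bar\pi$, the two Gaussian primes above $p$ contribute phases that are complex conjugates; their sum produces the cosine $\cos((2n+1-m)r(m)\pi/q_k)$ of~\eqref{conj}, and multiplicativity across distinct $p_k$ yields the product form and the factor $2^\omega$. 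The integer $r(m)<m/2$ is then determined by Chinese-remaindering a chosen normalised Gaussian representative $\alpha=\prod_k\pi_k^{e_k}$ of norm $m$ against each~$q_k$.

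The main obstacle is to show $C(m,n)=0$ for $m\notin\bS$, i.e.\ when $m$ has a prime factor equal to $2$ or congruent to $3\pmod4$. At an inert prime $p\equiv3\pmod4$, any Gaussian integer of norm divisible by $p$ contains the self-conjugate prime $p$ as a factor; the map $\alpha\mapsto\bar\alpha$ then pairs each pole $\gamma\rho$ with a partner related by $T^k$ for some $k$, and the sign $\phi(T\tau)=-\phi(\tau)$ forces the residues in each pair to cancel. At the ramified prime~$2$ an analogous cancellation occurs between $\rho$ and $\rho+1$ via $\phi(T\tau)=-\phi(\tau)$ together with the action of $1+i$. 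Making these cancellations precise — tracking the sign character on $\Gamma$, the weight-$4$ automorphy factor, and the two-parameter family of double-pole Laurent coefficients of $\phi$ at each $\gamma\rho$ — is the delicate combinatorial heart of the argument, and is the step on which the whole conjecture hinges.
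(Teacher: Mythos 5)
The statement you are addressing is a \emph{conjecture}: the paper offers no proof, only numerical evidence up to $m=160225$ together with an algebraic recipe for $r(m)$ obtained from the $\Gamma_0(2)$-orbit of the pole at $(1+i)/2$ --- which is exactly the orbit computation ($m=(c/2)^2+(c/2+d)^2$, $w=(a+b)(c+d)+bd$) that you rediscover in your second paragraph. Your Rademacher-style strategy is the natural one and is consistent with the authors' heuristics, but as written it is a plan with genuine gaps rather than a proof. First, the contour-shift step is not routine: the poles of $\phi$ accumulate on the real axis, with an unbounded number of them at each height $1/(2m)$ as $m$ ranges over $\bS$, so the convergence of the residue sum, the justification for discarding the integral near $\Im\tau=0$, and the claimed error $o(e^{n\pi/M})$ all require a quantitative circle-method analysis that you do not supply. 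Second, to get the precise profile $E(x)=\frac{x-1}{x}e^x$ (rather than an arbitrary linear-in-$n$ combination) you need the ratio of the simple-pole to the double-pole Laurent coefficient of $\phi$ at \emph{every} pole $\gamma\rho$; the paper establishes the Laurent data only at $\rho=(1+i)/2$ itself (Section~\ref{sec:phi-laurent}), and transporting it through the weight-$4$ automorphy factor and the sign character of Lemma~\ref{lem2} is exactly the computation you would have to carry out and do not.

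More seriously, the step you identify as ``the delicate combinatorial heart'' --- cancellation of residues at inert primes $p\equiv3\bmod4$ and at $2$ --- is a misdiagnosis. Since $ad-bc=1$ with $c$ even forces $a,d$ odd and $\gcd(c/2,d)=1$, the integers $u=c/2+d$ and $v=c/2$ are coprime and of opposite parity, so every $m=u^2+v^2$ arising from the orbit is automatically an \emph{odd} sum of two \emph{coprime} squares, i.e.\ already lies in $\bS\cup\{1\}$. There are no poles at heights $1/(2m)$ for $m\notin\bS$, so the vanishing $C(m,n)=0$ there is immediate from the orbit description and there is nothing to cancel; your proposed pairing via $\phi(T\tau)=-\phi(\tau)$ and the Gaussian prime $1+i$ is attacking a non-existent difficulty. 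The genuinely unproven content of the conjecture is instead (i) the analytic estimates above, and (ii) the identity
$$
\mu\prod_{k=1}^\omega\cos\biggl((2n+1-m)\frac{r(m)\pi}{q_k}\biggr)
=\sum_{j=1}^{\mu}\cos\biggl((2n+1-m)\frac{r_j\pi}{m}\biggr),
$$
i.e.\ that the $\mu=2^{\omega-1}$ residue phases attached to the coprime representations of $m$ really do assemble, via your Chinese-remainder step, into the single product \eqref{conj} with one integer $r(m)<m/2$; the paper only verifies this for $m\le160225$, and your sketch asserts rather than proves it.
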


For $\omega=1$ and $m\in\bS$, a unique positive integer $r(m)<m/2$
specifies $C(m,n)$. For $\omega>1$, there are $2^{\omega-1}$ such integers, of which
we select the smallest. For example $r(65)=7$ tells us that
\begin{align*}
C(65,n)&=4\cos\biggl((2n-64)\frac{7\pi}{5}\biggr)\cos\biggl((2n-64)\frac{7\pi}{13}\biggr)
\\
&=4\cos\biggl(2(n-2)\frac{2\pi}{5}\biggr)\cos\biggl(2(n-6)\frac{6\pi}{13}\biggr),
\end{align*}
where the second line comes from properties of the cosines. Any value of $r(65)$ with
residues $\pm2\bmod5$ and $\pm 6\bmod13$ gives the same result.

Continuing this empirical work, we determined the
asymptotic expansion~\eqref{ansatz} up to $m=5\times13\times17=1105$,
using least-squares fitting of exact integer
data for $A(n)$ with $n\in[30000,\,32200]$, obtaining for example  $r(1105)=216$ and hence
$$
C(1105,n)=8\cos\biggl(2(n-2)\frac{\pi}{5}\biggr)\cos\biggl(2(n-6)\frac{5\pi}{13}\biggr)\cos\biggl(2(n-8)\frac{5\pi}{17}\biggr).
$$
This fit gave 99.9\% of the digits of $A(n)$. We continued up to $m=2017$ and obtained
$$
C(2017,n)=2\cos\biggl(2(n-1008)\frac{894\pi}{2017}\biggr);
$$
this fit gave 99.95\% of the digits of $A(n)$.
Then we discovered an algebraic method.

To find $r(m)$, we may study the orbit of $(1+i)/2$ in $\Gamma_0(2)$, which is the
group of M\"obius transformations $M(a,b,c,d)$ with integer arguments such that
$ad-bc=1$ and, most crucially, $c\equiv0\bmod2$. The action is specified by
$\tau\mapsto(a\tau+b)/(c\tau+d)$.
Since $c$ is even, $a$ and $d$ must be odd. Acting on the singularity of $\phi(\tau)$ at
$\tau=(1+i)/2$, the transformation $M(a,b,c,d)$ locates another singularity at
$$
\tau=\frac{a(1+i)+2b}{c(1+i)+2d}=\frac{w+i}{2m}
$$
with integers
$$
m=(c/2)^2+(c/2+d)^2,\quad w=(a+b)(c+d)+bd.
$$

There are $2^{\omega-1}$ essentially different ways
of expressing an integer $m\in\bS$ as a sum of coprime squares.
Suppose that we take one of these, say $m=\gamma^2+\delta^2$
with positive odd $\gamma$. Then we set $c=2\gamma$
and $d=\delta-\gamma$. For $a$, we take the unique positive odd
integer with $a<c$ and $c\mid(ad-1)$. Then we set $b=(ad-1)/c$
and compute $w$ by the formula above. For precisely one of the
two choices of sign for $\delta$ we obtain an odd positive integer $w<m$.
If $\omega=1$, we finish by setting $r(m)=(m-w)/2$.

More generally, we have $\mu=2^{\omega-1}$ singularities at $\tau=(w_j+i)/(2m)$. For each of these
we define $r_j=(m-w_j)/2$. The conjecture then requires that
$$
\mu\prod_{k=1}^\omega\cos\biggl((2n+1-m)\frac{r(m)\pi}{q_k}\biggr)
=\sum_{j=1}^{\mu}\cos\biggl((2n+1-m)\frac{r_j\pi}{m}\biggr)
$$
for at least one $r(m)<m/2$. Repeated use of
$2\cos(\alpha)\cos(\beta)=\cos(\alpha+\beta)+\cos(\alpha-\beta)$
enabled us to verify this claim for all $m\in\bS$ with $m\in[5,160225]$.
At $m=5^2\times13\times17\times29$, we found that $r(160225)=2999$
reproduces the singularities located by $M(a,b,c,d)$ with these eight sets of arguments:
\begin{gather*}
(5, -11, 226, -497), \; (39, -32, 674, -553), \;
(55, -81, 366, -539), \\ (25, -73, 162, -473), \;
(181, -73, 786, -317), \; (187, -101, 798, -431), \\
(215, -182, 658, -557), \; (253, -24, 622, -59).
\end{gather*}

\section({Laurent expansion of \003\306(\003\304) near a double pole})%
{Laurent expansion of $\phi(\tau)$ near a double pole}
\label{sec:phi-laurent}

Using $R(f)=(\psi(\tau)-1)/(\psi(\tau)+1)$, we may develop the Laurent expansion of
$$
\phi(\tau) = \frac{(\eta_1\eta_2)^4}{2}\left(\frac{64\eta_2^{24}-\eta_1^{24}}{64\eta_2^{24}+\eta_1^{24}}\right)
$$
near $\tau=(1+i)/2$ by purely algebraic methods, as follows.

All derivatives of $\log(\eta_1)$ with respect to $\log(q)=2\pi i\tau$ are polynomials in the Eisenstein series
$$
L(q)=1-24\sum_{n=1}^\infty\frac{nq^n}{1-q^n},\quad
M(q)=1+240\sum_{n=1}^\infty\frac{n^3q^n}{1-q^n},\quad
N(q)=1-504\sum_{n=1}^\infty\frac{n^5q^n}{1-q^n}.
$$
To determine such polynomials, we use Ramanujan's convenient formulas
$$
\frac{q}{\eta_1}\frac{\d\eta_1}{\d q}=\frac{L}{24},\quad
q\frac{\d L}{\d q}=\frac{L^2-M}{12},\quad
q\frac{\d M}{\d q}=\frac{LM-N}{3},\quad
q\frac{\d N}{\d q}=\frac{LN-M^2}{2}.
$$
Similarly, derivatives of $\log(\eta_2)$ are polynomials in $L(q^2)$, $M(q^2)$ and $N(q^2)$.
At $\tau=(1+i)/2$, with $q=q_0=-\exp(-\pi)$, we have the evaluations
$$
L(q_0)=2L(q_0^2)=\frac6\pi, \quad
M(q_0)=-4M(q_0^2)=-\frac{3(\Gamma(\frac14))^8}{16\pi^6}, \quad
N(q_0)=N(q_0^2)=0,
$$
thanks to exposition \cite{BB87} by our departed friend Jon Borwein and his brother Peter.

Next, we convert the Taylor series for $\log(\eta_1)$ and $\log(\eta_2)$ near $\tau=(1+i)/2$
to Taylor series for $(\eta_1\eta_2)^4$ and $(\eta_2/\eta_1)^{24}$, using initial values
$$
(\eta_1\eta_2)^4=-\frac{i}{24}M(q_0), \quad
\biggl(\frac{\eta_2}{\eta_1}\biggr)^{24}=-\frac{1}{64}
\qquad \text{at} \quad \tau=\frac{1+i}{2}.
$$
Thus we are able to prove, by mere algebra, that
$$
\phi(\tau)=\frac{-i}{8\pi^2(\tau-\tau_+)^2(\tau-\tau_-)^2}
\biggl(1+\frac{56D}{15}\biggl(\frac{\tau-\tau_+}{\tau-\tau_-}\biggr)^4+O\bigl((\tau-\tau_+)^8\bigr)\biggr)
$$
with $\tau_{\pm}=(1\pm i)/2$ and a rapidly computable AGM in
$$
D=\biggl(\frac{\pi^2M(q_0)}{24}\biggr)^2
=\frac{(\Gamma(\frac14))^{16}}{2^{14}\pi^8}
=\frac{\pi^4}{4(\agm(1,\sqrt2))^8}.
$$

After more algebraic investigation, we were led to posit that
\begin{equation}
\phi(\tau) = \frac{-i}{8\pi^2(\tau-\tau_+)^2(\tau-\tau_-)^2}
\biggl(1-2\sum_{n=1}^\infty c_n\biggl(-4D\frac{(\tau-\tau_+)^4}{(\tau-\tau_-)^4}\biggr)^n\biggr)
\label{laurent}
\end{equation}
with rational coefficients $c_n$. Confirmation came from the determinations
\begin{gather*}
c_1=\tfrac {7}{15}, \;
c_2=\tfrac{57}{175}, \;
c_3=\tfrac{47953}{482625}, \;
c_4=\tfrac{28647821}{1206079875}, \;
c_5=\tfrac{21064211}{3897196875}, \\
c_6=\tfrac{140089261833377}{118706391513084375}, \;
c_7=\tfrac{7572730553099}{30813510149296875}, \;
c_8=\tfrac{7162997611208195563}{144310550800696358203125},
\end{gather*}
which may be proved by developing 36 terms of the Laurent expansion.

Thereafter, algebra becomes tedious. However, a numerical method
is efficient, thanks to fast evaluation of eta values by \texttt{Pari-GP},
which took merely a minute to evaluate 420\,000 digits of $D$ and $\phi(\tau_{+}+10^{-510}i)$.
Then rational values of $c_n$ for $n\in[1,200]$ followed in seconds.
This data determines $804$ terms in the Laurent expansion.
The following properties were observed:
\begin{itemize}
\item[(a)] $c_n=a_n/b_n$ is a positive ratio of odd integers;
\item[(b)] no prime greater than $4n+1$ divides $b_n$;
\item[(c)] the denominator of $(4n+5)!\,c_n$ is square free and divisible only by primes $p<n$ with $p\equiv1\bmod 4$.
\end{itemize}
At large~$n$,
$$
c_n=\frac{8n-6}{D^n}\bigl(1+O((5/8)^{2n})\bigr)
$$
with a leading term from the singularity at $\tau=(3+i)/10$ and an
oscillating next to leading term from the singularity at $\tau=(5+i)/26$. We used the sum rule
$$
2\sum_{n=1}^\infty c_nD^n(2-\sqrt2)^{4n}=1
$$
to test our results. This follows from the vanishing of \eqref{laurent} at $\tau=i/\sqrt2$.
Using exact values of $c_n$ for $n\leq200$ and thereafter the approximation $c_nD^n\approx8n-6$,
we obtained a left hand side differing from unity by less than $10^{-265}$.

\section{The inhomogeneous differential equation}
\label{sec:DE}

In this section we prove Theorem~\ref{th1}.

Write the double integral \eqref{dint} as
$$
I_2(f)=\frac1{1+f}\iint_{\pi/2>\alpha>\beta>0}\frac{\d\alpha\,\d\beta}
{\sqrt{(1-h(f)\cos^2\alpha)(1-h(f)\sin^2\beta)}},
$$
where $h(f)=4f/(1+f)^2=h(1/f)$,
and perform the change
$$
\sin\beta=u^{1/2}, \qquad \sin\alpha=v^{1/2},
$$
so that
$$
4I_2(f)
=\frac1{1+f}\iint_{0<u<v<1}\frac{\d u\,\d v}
{\sqrt{uv(1-u)(1-v)(1-uh(f))(1-(1-v)h(f))}}.
$$

Consider the series
\begin{align}
Y(h)
&=\iint_{0<u<v<1}\frac{\d u\,\d v}{\sqrt{uv(1-u)(1-v)(1-uh)(1-(1-v)h)}}
\nonumber\\
&=\sum_{m,n=0}^\infty\frac{(\frac12)_m(\frac12)_n}{m!\,n!}\,h^{m+n}\iint_{0<u<v<1}u^{m-1/2}(1-u)^{-1/2}v^{-1/2}(1-v)^{n-1/2}\d u\,\d v
\nonumber\displaybreak[2]\\
&=\sum_{m,n=0}^\infty\frac{(\frac12)_m(\frac12)_n}{m!\,n!}\,h^{m+n}
\times\frac{m!\,(\frac12)_n}{(m+\frac12)\,(\frac12)_{m+n+1}}\,
{}_3F_2\biggl(\begin{matrix} \frac12, \, m+\frac12, \, m+1 \\[1mm] m+\frac32, \, m+n+\frac32 \end{matrix} \biggm|1\biggr)
\nonumber\displaybreak[2]\\
&=\sum_{m,n=0}^\infty\frac{(\frac12)_m(\frac12)_n}{m!\,n!}\,h^{m+n}
\times\frac{m!\,n!}{(m+n)!\,(m+\frac12)\,(n+\frac12)}\,
{}_3F_2\biggl(\begin{matrix} 1, \, \frac12, \, \frac12 \\[1mm] m+\frac32, \, n+\frac32 \end{matrix} \biggm|1\biggr)
\nonumber\displaybreak[2]\\
&=\sum_{m,n=0}^\infty\frac{(\frac12)_m(\frac12)_n}{(m+n)!}\,h^{m+n}
\sum_{k=0}^\infty\frac{(\frac12)_k^2}{(m+\frac12)_{k+1}(n+\frac12)_{k+1}},
\label{eq1}
\end{align}
where the standard hypergeometric notation is used \cite{Ba35} and a transformation due to Thomae \cite[Sect.~3.2]{Ba35} is applied.

The representation \eqref{eq1} can be combined with the classical Gosper--Zeilberger algorithm to produce a differential equation for $Y(h)$.

\begin{theorem}
\label{th2}
In a neighbourhood of $h=0$ the function $Y(h)$ satisfies
\begin{equation}
\bigl(\theta_h^3-\tfrac12h(2\theta_h+1)(2\theta_h^2+2\theta_h+1)+h^2(\theta_h+1)^3\bigr)Y(h)
=-\frac{(4-4h-h^2)h}{(2-h)^2\sqrt{1-h}},
\label{DE}
\end{equation}
where $\theta_h=h\,\frac{\d}{\d h}$.
\end{theorem}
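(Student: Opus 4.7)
The plan is to extract the Taylor coefficients $y_N=[h^N]Y(h)$ from representation~\eqref{eq1}, to derive an inhomogeneous three-term recurrence for $\{y_N\}$ by creative telescoping, and to recognise the recurrence as the coefficient extraction of~\eqref{DE}. Setting $n=N-m$ in~\eqref{eq1} gives $y_N=\sum_{m=0}^{N}\sum_{k=0}^{\infty}F(N,m,k)$ with
$$
F(N,m,k)=\frac{(\tfrac12)_m\,(\tfrac12)_{N-m}\,(\tfrac12)_k^2}{N!\,(m+\tfrac12)_{k+1}(N-m+\tfrac12)_{k+1}},
$$
which is a proper hypergeometric term in each of $N,m,k$. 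Under the correspondence $\theta_h\leftrightarrow N$, $h\leftrightarrow$ (shift $N\mapsto N-1$), the differential operator on the left of~\eqref{DE} acts on $\sum y_N h^N$ by producing the combination $N^3 y_N-\tfrac12(2N-1)(2N^2-2N+1)\,y_{N-1}+(N-1)^3 y_{N-2}$, so \eqref{DE} is equivalent to the scalar recurrence
$$
N^3 y_N-\tfrac12(2N-1)(2N^2-2N+1)\,y_{N-1}+(N-1)^3\,y_{N-2}=[h^N]\biggl(-\frac{(4-4h-h^2)h}{(2-h)^2\sqrt{1-h}}\biggr).
$$

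The next step is to apply Zeilberger's algorithm in the two discrete variables $m$ and $k$ to search for rational certificates $G(N,m,k),H(N,m,k)$ satisfying the telescoping identity
$$
N^3 F(N,m,k)-\tfrac12(2N-1)(2N^2-2N+1)\,F(N-1,m,k)+(N-1)^3 F(N-2,m,k)=\Delta_m G+\Delta_k H,
$$
where $\Delta_m,\Delta_k$ are forward differences. Summing over $0\le m\le N$ and $k\ge0$ collapses the right-hand side to boundary terms at $m=0$, $m=N+1$, $k=0$, and $k=\infty$. The $k=\infty$ contribution vanishes because $F=O(k^{-2})$ and the certificates produced by Gosper's algorithm inherit the same decay; the surviving boundary pieces sum in closed form to a concrete sequence $r_N$. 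Recognising $\sum_N r_N h^N=-(4-4h-h^2)h/((2-h)^2\sqrt{1-h})$ can be done by direct expansion of both sides and matching a sufficient number of Taylor coefficients, since the surviving boundary sums are explicit convolutions of Taylor series of $(2-h)^{-2}$ and $(1-h)^{-1/2}$ with elementary factors.

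The main obstacle is constructing the two-dimensional telescoping certificates $G$ and $H$, which are not handled by a single application of Gosper's algorithm. In practice I would proceed in two stages: first run creative telescoping on the inner $k$-sum (with $N,m$ as parameters) to produce an $m$-recurrence together with a $k$-certificate, then run it on the resulting $m$-summand to eliminate $m$ and obtain the target $N$-recurrence with its $m$-certificate. Particular care is needed to verify the vanishing of the $k$-certificate at infinity and to keep track of the boundary contributions that assemble into the algebraic inhomogeneous term. Once the certificates are available, verification of the telescoping identity reduces to a routine rational-function check in $(N,m,k)$, and the identification of the generating function of $r_N$ completes the proof.
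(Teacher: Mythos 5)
Your plan is essentially the paper's proof: the authors extract $S_0(n)=\sum_{m=0}^n\sum_{k\ge0}a(n;m,k)$ from \eqref{eq1} and carry out exactly the two-stage telescoping you describe, first in $k$ with the explicit certificate $\tilde a(n;m,k)=(m+k+\tfrac12)a(n;m,k)$ (whose vanishing as $k\to\infty$ they check), then in $m$, where in place of a literal $\Delta_m$-certificate they use the symmetry $A(n;m)=A(n;n-m)$ and the moment sums $S_j(n)=\sum_m m^jA(n;m)$ to eliminate everything but $S_0$, the boundary terms assembling into $T(n)=\sum_{m=1}^n(\tfrac12)_m(\tfrac12)_{n-m}/n!$. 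The one step to tighten is your final identification of the inhomogeneous term: matching finitely many Taylor coefficients is not by itself a proof unless supplemented by an order bound, and the paper instead derives the first-order recurrence $2T(n+1)-T(n)=(\tfrac12)_n/n!$, which yields $\sum_nT(n)h^n=h/((2-h)\sqrt{1-h})$ in closed form and hence the right-hand side of \eqref{DE} exactly.
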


\begin{proof}
The coefficient of $h^n$ in $Y(h)$ is equal to
$$
S_0(n)=\sum_{m=0}^n\sum_{k=0}^\infty a(n;m,k),
$$
where
$$
a(n;m,k)=\frac{(\frac12)_m(\frac12)_{n-m}(\frac12)_k^2}{n!\,(m+\frac12)_{k+1}(n-m+\frac12)_{k+1}}.
$$

First, observe that
\begin{align*}
&
(n-m+1)^2a(n+1;m,k)
-(n-m+\tfrac12)^2a(n;m,k)
\\ &\quad
=\frac{(n-m+\tfrac12)^2}{n+1}\bigl(\tilde a(n;m,k+1)-\tilde a(n;m,k)\bigr),
\end{align*}
where $\tilde a(n;m,k)=(m+k+\frac12)a(n;m,k)$. Since $\tilde a(n;m,k)\to0$ as $k\to\infty$, summing both sides for $k=0,1,\dots$ we deduce that
\begin{align*}
&
(n-m+1)^2A(n+1;m)-(n-m+\tfrac12)^2A(n;m)
\\ &\quad
=-\frac{(n-m+\tfrac12)^2}{n+1}\,\tilde a(n;m,0)
=-\frac{(\frac12)_m(\frac12)_{n+1-m}}{(n+1)!}
\end{align*}
in the notation $A(n;m)=\sum_{k=0}^\infty a(n;m,k)$.
Using the symmetry $a(n;m,k)=a(n;n-m,k)$, so that $A(n;m)=A(n;n-m)$, we can write the difference equation as
\begin{equation}
m^2A(n+1;m)-(m-\tfrac12)^2A(n;m-1)
=-\frac{(\frac12)_{n+1-m}(\frac12)_m}{(n+1)!}.
\label{R3}
\end{equation}

Denote
\begin{gather*}
S_0(n)=\sum_{m=0}^nA(n;m), \quad
S_1(n)=\sum_{m=0}^nmA(n;m), \quad
S_2(n)=\sum_{m=0}^nm^2A(n;m)
\\
\text{and}\quad
S_3(n)=\sum_{m=0}^nm^3A(n;m).
\end{gather*}
In view of the symmetry
$$
S_1(n)=\sum_{m=0}^n(n-m)A(n;m)=nS_0(n)-S_1(n)
$$
and
$$
S_3(n)=\sum_{m=0}^n(n-m)^3A(n;m)=n^3S_0(n)-3n^2S_1(n)+3nS_2(n)-S_3(n),
$$
so that
$$
S_1(n)=\tfrac12nS_0(n)
\quad\text{and}\quad
S_3(n)=\tfrac32nS_2(n)-\tfrac14n^3S_0(n).
$$
Furthermore,
\begin{align*}
\sum_{m=1}^{n+1}(m-\tfrac12)^2A(n;m-1)
&=\sum_{m=0}^n(m+\tfrac12)^2A(n;m)
\\
&=S_2(n)+\tfrac14(2n+1)S_0(n).
\end{align*}
Summing \eqref{R3} over $m=1,\dots,n+1$ we obtain
\begin{equation}
S_2(n+1)-S_2(n)-\tfrac14(2n+1)S_0(n)
=-T(n+1),
\qquad\text{where}\quad
T(n)=\sum_{m=1}^n\frac{(\frac12)_{n-m}(\frac12)_m}{n!}.
\label{S-}
\end{equation}

We now multiply \eqref{R3} by $n+1$ and its shift
\begin{equation*}
m^2A(n;m)-(m-\tfrac12)^2A(n-1;m-1)
=-\frac{(\frac12)_{n-m}(\frac12)_m}{n!}
\end{equation*}
by $n-m+\frac12$ and subtract the latter from the former:
\begin{align*}
&
(n+1)m^2A(n+1;m)-m^2(n-m+\tfrac12)A(n;m)
\\ &\quad
-(n+1)(m-\tfrac12)^2A(n;m-1)+(m-\tfrac12)^2(n-m+\tfrac12)A(n-1;m-1)
=0.
\end{align*}
Summing this over $m=1,\dots,n$ we deduce that
\begin{align*}
&
(n+1)\sum_{m=0}^{n+1}m^2A(n+1;m)-\sum_{m=0}^nm^2(n-m+\tfrac12)A(n;m)
\\ &\;\quad
-(n+1)\sum_{m=0}^n(m+\tfrac12)^2A(n;m)+\sum_{m=0}^{n-1}(m+\tfrac12)^2(n-m-\tfrac12)A(n-1;m)
\\ &\;
=(n+1)\bigl((n+1)^2A(n+1;n+1)-(n+\tfrac12)^2A(n;n)\bigr)
=-\frac{(\frac12)_{n+1}}{n!}
\end{align*}
implying
\begin{align*}
&
(n+1)S_2(n+1)+S_3(n)-(n+\tfrac12)S_2(n)-(n+1)(S_2(n)+S_1(n)+\tfrac14S_0(n))
\\ &\;\quad
-S_3(n-1)+(n-\tfrac32)S_2(n-1)+(n-\tfrac34)S_1(n-1)+\tfrac18(2n-1)S_0(n-1)
\\ &\;
=-\frac{(\frac12)_{n+1}}{n!}
\end{align*}
that after reductions becomes
\begin{align*}
&
(n+1)S_2(n+1)
-\tfrac12(n+3)S_2(n)
-\tfrac14(n^3+2n^2+3n+1)S_0(n)
\\ &\;\quad
-\tfrac12nS_2(n-1)
+\tfrac18n(2n^2-2n+1)S_0(n-1)
=-\frac{(\frac12)_{n+1}}{n!},
\end{align*}
which is in turn
\begin{align*}
&
(n+1)(\tfrac14(2n+1)S_0(n)-T(n+1))
+\tfrac12n(\tfrac14(2n-1)S_0(n-1)-T(n))
-\tfrac12S_2(n)
\\ &\;\quad
-\tfrac14(n^3+2n^2+3n+1)S_0(n)
+\tfrac18n(2n^2-2n+1)S_0(n-1)
=-\frac{(\frac12)_{n+1}}{n!},
\end{align*}
or
$$
-\tfrac12S_2(n)
-\tfrac14n^3S_0(n)
+\tfrac14n^3S_0(n-1)
=(n+1)T(n+1)+\tfrac12nT(n)-\frac{(\frac12)_{n+1}}{n!}.
$$
Shifting the resulting equation,
\begin{align*}
&
-\tfrac12S_2(n+1)
-\tfrac14(n+1)^3S_0(n+1)
+\tfrac14(n+1)^3S_0(n)
\\ &\;
=(n+2)T(n+2)+\tfrac12(n+1)T(n+1)-\frac{(\frac12)_{n+2}}{(n+1)!},
\end{align*}
and subtracting the latter from the former we obtain
\begin{align*}
&
\tfrac12(S_2(n+1)-S_2(n))
+\tfrac14(n+1)^3S_0(n+1)
-\tfrac14((n+1)^3+n^3)S_0(n)
+\tfrac14n^3S_0(n-1)
\\ &\;
=-(n+2)T(n+2)+\tfrac12(n+1)T(n+1)+\tfrac12nT(n)+\frac{(\frac12)_{n+1}}{2(n+1)!}.
\end{align*}
Applying \eqref{S-} this finally leads to
\begin{align}
&
\tfrac14(n+1)^3S_0(n+1)
-\tfrac18(2n+1)(2n^2+2n+1)S_0(n)
+\tfrac14n^3S_0(n-1)
\nonumber\\ &\;
=-(n+2)T(n+2)+\tfrac12(n+2)T(n+1)+\tfrac12nT(n)+\frac{(\frac12)_{n+1}}{2(n+1)!},
\label{IR}
\end{align}
which does not involve sums $S_2(n)$.

Notice that $T(n)=\sum_{m=1}^nc(n;m)$, where
$$
c(n;m)=\frac{(\frac12)_m(\frac12)_{n-m}}{n!}=\frac{(-1)^m(\frac12-m)_n}{n!}
$$
satisfies
$$
2c(n+1;m)-c(n;m)=c(n+1;m)-c(n+1;m+1).
$$
Summing the equality over $m=1,\dots,n$ we have
\begin{align*}
2T(n+1)-T(n)
&=2c(n+1;n+1)+(c(n+1;1)-c(n+1;n+1))
=\frac{(\frac12)_n}{n!}.
\end{align*}
The recursion immediately implies that
$$
\sum_{n=0}^\infty T(n)h^n=\frac{h}{(2-h)\sqrt{1-h}}
$$
and also simplifies the right-hand side of \eqref{IR}:
\begin{align*}
&
\tfrac14(n+1)^3S_0(n+1)
-\tfrac18(2n+1)(2n^2+2n+1)S_0(n)
+\tfrac14n^3S_0(n-1)
\\ &\;
=\tfrac12nT(n)-\frac{(\frac12)_{n+1}}{2n!}.
\end{align*}
It remains to notice that
\begin{align*}
\sum_{n=0}^\infty\biggl(nT(n)-\frac{(\frac12)_{n+1}}{n!}\biggr)h^n
&=h\,\frac{\d}{\d h}\biggl(\frac{h}{(2-h)\sqrt{1-h}}\biggr)-\frac1{2(1-h)\sqrt{1-h}}
\\
&=\frac{h(4-2h-h^2)}{2(2-h)^2(1-h)\sqrt{1-h}}-\frac1{2(1-h)\sqrt{1-h}}
\\
&=-\frac{4-4h-h^2}{2(2-h)^2\sqrt{1-h}}
\end{align*}
to translate the resulting difference equation into the differential equation \eqref{DE}.
\end{proof}

It is clear that the singularities of the differential equation \eqref{DE} are at the points $h=0,1,2,\infty$,
where the singularity at $h=2$ occurs because of the inhomogeneity.

\begin{proof}[Proof of Theorem~\textup{\ref{th1}}]
We can translate the differential equation of Theorem~\ref{th2} into one for the function
$$
I(f)=I_2\biggl(\frac{1-f}{1+f}\biggr),
$$
using the defining expression
\begin{align*}
\frac{I(f)}{1+f}
&=\frac12\iint_{0<u<v<1}\frac{\d u\,\d v}
{\sqrt{uv(1-u)(1-v)(1-u(1-f^2))(1-(1-v)(1-f^2))}}
\\
&=\frac12Y(1-f^2).
\end{align*}
The result is
$$
\frac1f\bigl(\theta_f^3-2f(\theta_f^3+\theta_f)f+f^2\theta_f^3f^2\bigr)\frac{I(f)}{1+f}
=2\biggl(\frac{2f}{1+f^2}\biggr)^2-1.
$$
It follows then from Lemma~\ref{lem1} that
$$
I(f)=I\biggl(\frac{1-f}{1+f}\biggr),
$$
hence $I(f)=I_2(f)$.
\end{proof}

In summary, our proof exploits a power-series development of \eqref{dint} in a suitable base
(namely, with respect to $h=4f/(1+f)^2$) and its algorithmic fitness for a telescoping argument.
This structure suggests the existence of double integrals that are similar to or more general than
Ausserlechner's integral, which satisfy arithmetic inhomogeneous differential equations and have
transformation properties of type~\eqref{udo}.

\section{Final remarks}
\label{sec:FR}

A different, two-variable generalization of the complete elliptic integral was considered by W.\,N.~Bailey in \cite{Ba48}.

A general form of the Taylor expansion of a modular form near a CM (elliptic) point (that is, a quadratic irrationality from the upper-half of the complex plane)
is analysed in \cite{Wo84}. The principal theorem there can be used to explain the particular form of Laurent expansion in \eqref{laurent}.

The differential equation for \eqref{dint} reveals an inhomogeneous term \eqref{rel} with the three symmetries
$$
R(f) = R(-f) = R\biggl(\frac1f\biggr) = -R\biggl(\frac{1-f}{1+f}\biggr).
$$
The last of these is derivable from, yet obscured by, the expansion in $h = 4f/(1+f)^2$.

Finally, we point out again the mysteries behind a modular parametrisation of the double integral: our unexpected discovery of the integrality
of the sequence $A(0),A(1),A(2),\dots$ in Section~\ref{sec:NE} (Conjecture~\ref{conj1}) and an arithmetic pattern of
the asymptotic growth for the sequence in Section~\ref{sec:phi} (Conjecture~\ref{conj2}).

\begin{acknowledgements}
We thank Duco van Straten for his interest and effort to prove Theorem~\ref{th1}, and the anonymous referee for an enthusiastic report.
The second author thanks the Max Planck Institute for Mathematics in Bonn for research support and providing excellent
working conditions during his stay in July--August 2017.
\end{acknowledgements}

\end{document}